\documentclass[english,11pt,two]{article}

\usepackage[a4paper, margin=2.5cm]{geometry}
\usepackage{epstopdf}
\usepackage{tikz}
\usetikzlibrary{calc}
\usepackage{graphicx}
\usepackage{pgfplots}
\usepackage{bbding}
\usepackage{amsmath}
\usepackage{amssymb}
\usepackage{amsthm}
\usepackage{ifthen}
\usepackage{url}
\usepackage{array}

\newtheorem{theorem}{Theorem}

\newtheorem{proposition}{Proposition}
\newtheorem{corollary}{Corollary}

\newtheorem{assumption}{Assumption}
\newtheorem{remark}{Remark}
\newtheorem{example}{Example}

\newtheorem{definition}{Definition}

\usepackage[font=small,labelfont=bf]{caption}

\renewcommand{\phi}{\ensuremath{\varphi}}

\usepackage{natbib}
 \bibpunct[, ]{(}{)}{,}{a}{}{,}%
 \def\newblock{\ }%

\linespread{1.15}
\title{Piecewise constant decision rules via branch-and-bound based scenario detection for integer adjustable robust optimization}
\author{Ward Romeijnders \thanks{Department of Operations, University of Groningen, The Netherlands e-mail: w.romeijnders@rug.nl} 
\and Krzysztof Postek \thanks{Econometric Institute, Erasmus University Rotterdam, The Netherlands, e-mail: postek@ese.eur.nl}}
\date{}
\begin{document}
\maketitle

\abstract{Multi-stage problems with uncertain parameters and integer decisions variables are among the most difficult applications of robust optimization (RO). The challenge in these problems is to find optimal here-and-now decisions, taking into account that the wait-and-see decisions have to adapt to the revealed values of the uncertain parameters. \cite{Postek2016} and \cite{Bertsimas2016} propose to solve these problems by constructing piecewise constant decision rules by adaptively partitioning the uncertainty set. The partitions of this set are iteratively updated by separating so-called critical scenarios of the uncertain parameters. Both references present methods for identifying these critical scenarios. However, these methods are most suitable for problems with continuous decision variables and many uncertain constraints. In particular, they are not able to identify informative sets of critical scenarios for integer problems with uncertainty in the objective function only. In this note, we address this shortcoming of existing methods by introducing a general critical scenario detection method for mixed-integer adjustable robust optimization problems that is based on the branch-and-bound tree used to solve the corresponding static problem. Numerical experiments on a route planning problem show that our general-purpose method outperforms the problem-specific approach of \cite{Postek2016}.}\\
\textbf{Keywords:} robust optimization; adjustability; adaptivity; mixed-integer

\section{Introduction}
Robust optimization \citep{BenTal2009} is a paradigm for dealing with uncertainty in mathematical optimization problems where the objective function is minimized under the assumption that the uncertain parameters attain their worst-case value from an uncertainty set, \emph{i.e.}, a set of likely values. This methodology has found a wide range of applications, see, \emph{e.g.}, inventory management \citep{BenTal2004}, facility location \citep{Ordonez2007}, network design \citep{Atamturk2007}, finance \citep{Fabozzi2010}, and many others. For a broad overview of applications of robust optimization (RO), we refer the reader to \citet{Gabrel2014}.

An important class of RO problems are multi-stage problems where here-and-now decisions are implemented before (some of) the uncertain parameters are revealed, and wait-and-see decisions are made when these uncertain parameters are known. The wait-and-see decisions will typically differ for different realizations of the uncertain parameters and this is why we call them adjustable decisions. In general, such adjustable problems are $\mathcal{NP}$-hard \citep{BenTal2009}, even for problems with continuous decision variables only, and thus require good suboptimal but tractable solutions. For this reason, \cite{BenTal2004} propose to formulate the later-stage decisions as affine functions of the uncertain parameters. Their approach has later been extended to other function classes; see, \emph{e.g.}, \citet{Chen2009} and \citet{Bertsimas2011}. An alternative solution method that bypasses the need for decision rules is to use Fourier-Motzkin elimination to remove the later-stage decisions from the problem formulation \citep{Zhen2017}.

Incorporating adjustable decisions in RO problems becomes more challenging if (some of) the decisions are restricted to be integer. In this case, it becomes difficult to formulate these decisions as tractable functions of the uncertain parameters. First attempts to address this difficulty include  \cite{Caramanis2007} who construct rounding-based decision rules based on sampling that are feasible with high probability, and  \citet{Vayanos2011} who partition the uncertainty set \textit{ex ante} into small subsets with different decisions each. 

In the current literature, we distinguish three systematic approaches for designing integer decision rules for mixed-integer adjustable RO problems. The first is to use piecewise linear decision rules for both continuous and binary decision variables, proposed by \citet{Bertsimas2015}. They formulate the decision rules as differences of two convex functions, and for binary variables the value 0 is implemented if the decision rule is positive, and the value 1, otherwise. In a related fashion, the decisions in the approach of \cite{Bertsimas2017} are affine transformations of multiple indicator functions of half-spaces in the space of uncertain parameters.

The second approach is the $K$-adaptability \citep{Caramanis2010}, proposed in the integer context by \citet{Hanasusanto2015}. In this approach, $K$ possible values for the adjustable decisions are selected here-and-now, and for each outcome of the uncertain parameters the best out of these $K$ possible values will be selected for the wait-and-see decisions. The corresponding optimization problem is solved by reformulating it as a static mixed-integer RO problem. This approach was extended by \cite{Subramanyam2017} who allow discrete uncertain parameters and develop a branch-and-bound algorithm for the $K$-adaptable problem. 

The third approach is the splitting methodology proposed by \cite{Postek2016} and \cite{Bertsimas2016}; the latter use the term `partitioning' instead of `splitting'. In this approach, the uncertainty set is iteratively split into smaller subsets. For each subset, a possibly different value for the adjustable decisions is selected that will be implemented if the uncertain parameter turns out to be in that subset. The uncertainty set is split based on \emph{critical scenarios} of the uncertain parameters, since the theory for detecting these critical scenarios shows that if they are not separated from each other, the objective value of the solution induced by the split uncertainty set cannot improve. This theory, however, only holds for problems with continuous decision variables, and can only be heuristically applied to some mixed-integer problems. In particular, for mixed-integer adjustable RO problems with uncertainty in the objective function only, this theory is unable to detect critical scenarios that need to be split.

We address exactly this shortcoming by detecting critical scenarios in mixed-integer adjustable RO problems. Our method is based on the branch-and-bound (B\&B) tree (see, \textit{e.g.}, \cite{Schrijver1986}) used for solving the corresponding static mixed-integer RO problem (where the later-stage decisions are the same for all realizations of uncertainty). In fact, the critical scenarios are obtained directly from the optimal dual solutions of the LP relaxations of a specific set of nodes in this B\&B tree. This means that they can be obtained as by-product when solving the static mixed-integer RO problem. 

In this note, we only present our critical scenario detection method for  \emph{two-stage} mixed-integer adjustable RO problems for ease of exposition. However, similarly as in \cite{Postek2016} and \cite{Bertsimas2016} it can easily be extended to the multi-stage case by enforcing the nonanticipativity constraints.

The main contributions of our note are as follows:
\begin{itemize}
\item we detect critical scenarios in mixed-integer adjustable RO problems using branch-and-bound, extending the theory of \cite{Postek2016} and \cite{Bertsimas2016};
\item we derive an optimality criterion for our splitting methodology, proving when the uncertainty set requires no more partitioning;
\item we show using numerical experiments on a route planning problem that our general-purpose critical scenario detection method outperforms the problem-specific heuristic developed in \cite{Postek2016}.
\end{itemize}


The remainder of this paper is organized as follows. In Section~\ref{sec.Splitting} we review the splitting methodology of \cite{Postek2016} and \cite{Bertsimas2016}. In Section~\ref{sec.BBsplitting} we derive our critical scenario detection method. In Section~\ref{sec.experiment} we illustrate our method using numerical experiments on a route planning problem, and we end with conclusions in Section \ref{sec.Summary}.

\section{Splitting methodology for mixed-integer adjustable RO problems} \label{sec.Splitting}
We consider the mixed-integer adjustable RO problem:
\begin{align}
\bar{t} := \min\limits_{t, x, y(z)} \ & t \label{eq.initial.problem} \tag{ARO} \\
\text{s.t.} \ & t - c(z)^\top x - q(z)^\top y(z) \geq 0 &&  \forall z \in Z \nonumber \\
& a_i(z)^\top x + w_i(z)^\top y(z) \geq b_i && \forall z \in Z && \forall i \in \mathcal{I} \nonumber \\
& x \in X, \,\, y(z) \in Y            && \forall z \in Z, \nonumber
\end{align}
where the uncertainty is in both the cost parameters $c(z), q(z)$ and the constraint coefficients $a_i(z), w_i(z), i \in \mathcal{I}$, with $z$ representing the uncertain parameters in the model and $Z$ a polyhedral uncertainty set defined by $Z = \{z \in \mathbb{R}^L: Pz \leq p\}$, and the sets $X$ and $Y$ represent non-negativity and integer restrictions. In this problem the decisions $x \in \mathbb{R}^{d_1}$ have to be determined \textit{before} the value of the uncertain parameter $z$ is known, whereas decisions $y(z) \in \mathbb{R}^{d_2}$ are made after the realizations of $z$ are revealed. We assume w.l.o.g. that the first $m_1$ and $m_2$ components of the decision vectors $x$ and $y(z)$, respectively, are restricted to be integer. Thus, $X = \mathbb{Z}^{m_1}_{+} \times \mathbb{R}^{d_1-m_1}_{+}$ and $Y = \mathbb{Z}^{m_2}_{+} \times \mathbb{R}^{d_2-m_2}_{+}$. Moreover, we make the following assumptions with respect to the uncertainty set $Z$ and the parameters in the model.
\begin{assumption} \label{assumption.nonempty.bounded}
The uncertainty set $Z$ is nonempty and bounded.
\end{assumption}
\begin{assumption}  \label{assumption.polyhedron}
All parameters $c(z)$, $q(z)$, $a_i(z)$ and $w_i(z)$ are affine in the uncertain parameter $z$. That is, $ c(z) = \overline{c} + Cz, \ q(z) = \overline{q} + Qz, \ a_i(z) = \overline{a}_i + A_i z$, and  $w_i(z) = \overline{w}_i + W_i z$,
where $\overline{c}, \overline{a}_i \in \mathbb{R}^{d_1}$ and $\overline{q}, \overline{w}_i \in \mathbb{R}^{d_2}$ represent the nominal values and $C, A_i \in \mathbb{R}^{d_1 \times L}$ and $Q, W_i \in \mathbb{R}^{d_2 \times L}$.  
\end{assumption} 
\cite{BenTal2009} show that the adjustable optimization problem is NP-hard, even when all decision variables are continuous. For this reason, a typical approach to simplify such problems is to restrict $y(z)$ to a particular class of functions. For example, \cite{BenTal2004} propose so-called affine decision rules, meaning that $y(z)$ is an affine function of $z$. The problem with this approach in our setting is that affine decision rules cannot be applied if some of the second-stage decisions are integer.

Instead, we follow the approach of \cite{Postek2016} and \cite{Bertsimas2016} and construct piecewise constant decision rules for mixed-integer adjustable RO problems by splitting the uncertainty set. After $r$ rounds of iterative splitting we obtain a partition $\mathcal{Z}_r$ of $Z$ given by $\mathcal{Z}_r = \{ Z_{r,s}, \ s \in \mathcal{S}_r \}$ where $Z_{r,s}$ are nonempty subsets of $Z$ with mutually disjoint interiors and such that $\cup_{s \in \mathcal{S}_r } Z_{r,s} = Z$. A piecewise constant decision rule can now be obtained by assuming that for each $s \in \mathcal{S}_r $, we will select the second-stage decision $y^{r,s}$ for each $z \in Z_{r,s}$. That is, for all $z \in Z$,
$$ 
y(z) =  y^{r,s} \ \quad \text{if} \ z \in Z_{r,s}.
$$
Under this assumption, the mixed-integer adjustable RO problem (\ref{eq.initial.problem}) reduces to 
\begin{align} \label{eq:two_period_after_general_splitting} \tag{$\text{RO}_r$}
\bar{t}^r := \min\limits_{t^r, x^r, y^{r,s}} \ & t^r \\
\text{s.t.} \ &t^r - c(z)^Tx^r - q(z)^Ty^{r,s} \geq 0 && \forall z \in Z_{r,s} &&  \forall s \in \mathcal{S}_r   \nonumber \\
& a_i(z)^\top x^r + w_i(z)^\top y^{r,s} \geq b_i && \forall z \in Z_{r,s} && \forall  i \in \mathcal{I}, \ \forall s \in \mathcal{S}_r  \nonumber \\
& x \in X, \,\, y^{r,s} \in Y &&&& \forall s \in \mathcal{S}_r . \nonumber
\end{align}
where $Z_{r,s} = \{z: P^{r,s} z \leq p^{r,s} \}$ for all $s \in \mathcal{S}_r $. That is, we assume that the subsets $Z_{r,s}$ are weakly separated from each other by hyperplanes.

The problem (\ref{eq:two_period_after_general_splitting}) is a static mixed-integer RO problem in which all decisions, \emph{i.e.}, $x^r$ and $y^{r,s}$, $s \in \mathcal{S}_r$, have to be determined before the uncertain parameter $z$ is known. Clearly, by iteratively splitting the uncertainty sets $Z_{r,s}$, the approximation (\ref{eq:two_period_after_general_splitting}) of the mixed-integer adjustable RO problem (\ref{eq.initial.problem}) iteratively improves.

Our contribution is that we determine how to iteratively split the uncertainty sets. This is a generalization of the results of \cite{Postek2016} for the case in which all decision variables are continuous. In this case, they detect a finite set $\overline{Z}_{r,s}$ of \emph{critical scenarios} for each uncertainty subset $Z_{r,s}, \ s \in \mathcal{S}_r$, after each splitting round $r$. They show that if none of these sets $\overline{Z}_{r,s}$ of critical scenarios are split in round $r+1$, then $\bar{t}^{r+1} = \bar{t}^r$, \emph{i.e.}, the worst-case objective value does not decrease. This provides a theoretical justification for splitting the sets $\overline{Z}_{r,s}$ of critical scenarios.

This theoretical result, however, does not hold when some of the decision variables in the model are restricted to be integer. Therefore, \cite{Postek2016} propose to use the critical scenarios $\overline{Z}_{r,s}$ of the LP relaxation of (\ref{eq:two_period_after_general_splitting}). However, in general this approach does not work. For example, if there is only uncertainty in the cost parameters $c(z)$ and $q(z)$, then the LP relaxation may only find a single critical scenario $\overline{z}_{r,s}$ per uncertainty subset $Z_{r,s}$, giving us no indication on how to splits these sets. 

\begin{example} \label{example.relaxation.vs.milp}
In Figure~\ref{fig.detection} we graphically illustrate why it may be insufficient to use only the critical scenarios of the LP relaxation of (\ref{eq:two_period_after_general_splitting}). 

In this example, we assume that there are two adjustable decision variables $y_1$ and $y_2$ both of which are integer. Moreover, we assume that there is no uncertainty in the cost parameters $q(z)$. Thus, only the feasible region of this problem depends on the uncertain parameter $z$. In Figure~\ref{fig.detection} the feasible regions of the LP relaxation of (\ref{eq:two_period_after_general_splitting}), corresponding to two realizations $\overline{z}^1$ and $\overline{z}^2$ of this uncertain parameter, are represented by the two quadrilaterals.

In the left panel, the feasible region of the LP relaxation of (\ref{eq:two_period_after_general_splitting}) is depicted as the shaded intersection of the two quadrilaterals. At the optimal solution $(1.25, 1.5)$, only the constraints corresponding to scenario $\overline{z}^2$ are active, and thus $\overline{z}^2$ is the only critical scenario. Notice that without the constraints corresponding to $\overline{z}^1$, the same solution would be optimal, and thus the worst-case objective function of (\ref{eq.initial.problem}) will not improve if we are allowed to make different (continuous) decisions $y_1$ and $y_2$ for the different scenarios $\overline{z}^1$ and $\overline{z}^2$.

In the right panel, we consider the integer RO problem (\ref{eq:two_period_after_general_splitting}). Its feasible region consists of all integer points in the intersection of the two quadrilaterals with optimal solution $(2,2)$. Contrary to the LP relaxation, it is possible to improve the worst-case objective value of (\ref{eq.initial.problem}) by separating scenarios $\overline{z}^1$ and $\overline{z}^2$ and thus making different decisions $(y_1, y_2)$ for the scenarios -- $(1,2)$ and $(2,1)$ -- both with smaller objective function values than $(2, 2)$.   
\begin{figure}
\centering
\begin{tikzpicture}[scale = 0.8]

\def\circlerad{2.0}

\draw [<->,thick] (0,6) -- (0,0) -- (8,0);
\draw[help lines,opacity = 0.5] (0,0) grid (8,6);
\node at (8, -0.5){{\scriptsize $y_1$}};
\node at (-0.5, 6){{\scriptsize $y_2$}};
\draw [->,thick] (1, 2) -- (1 - 1.5, 2 -0.75);

\foreach \y in {1,...,5}
	\draw (-0.1, \y) -- (0.1, \y);
	
\foreach \y in {1,...,7}
	\draw  (\y, -0.1) -- (\y, 0.1);

\draw [thick, color=black] (1.25,5.75) -- (1.25,1.5) -- (2, 0.75)  -- (6, 5.75) -- (1.25,5.75);
\node at (6.5, 3){{\scriptsize $\overline{z}^1$}};
\draw [thick, color=black] (1, 5.5) -- (0.5, 1) -- (6, 3) -- (4, 5) -- (1, 5.5);
\node at (6.5, 6){{\scriptsize $\overline{z}^2$}};

\draw [fill=black,opacity=0.1] (1.25, 5.44) -- (4, 5) -- (4.76, 4.22) -- (2.8, 1.8) -- (1.38, 1.34) -- (1.25, 1.5) -- (1.25, 5.44);

\draw [thick, color=black, dashed] (1.25-1, 1.5+2) -- (1.25+1, 1.5-2) ;
\draw[fill,color = black] (1.25, 1.5) circle [radius=\circlerad pt];

\node[color=black,opacity=0.75] at (1, -0.5) {{\scriptsize 1}};
\node[color=black,opacity=0.75] at (-0.5, 1) {{\scriptsize 1}};


\def\shiftpicture{10}

\draw [<->,thick] (\shiftpicture+0,6) -- (\shiftpicture+0,0) -- (\shiftpicture+8,0);
\draw[help lines,opacity = 0.5] (\shiftpicture + 0,0) grid (\shiftpicture + 8,6);
\node at (\shiftpicture+8, -0.5){{\scriptsize $y_1$}};
\node at (\shiftpicture-0.5, 6){{\scriptsize $y_2$}};

\foreach \y in {1,...,5}
	\draw (\shiftpicture-0.1, \y) -- (\shiftpicture+0.1, \y);
	
\foreach \y in {1,...,7}
	\draw  (\shiftpicture+\y, -0.1) -- (\shiftpicture+\y, 0.1);

\draw [thick, color=black] (\shiftpicture + 1.25, 5.75) -- (\shiftpicture + 1.25,1.5) -- (\shiftpicture + 2, 0.75)  -- (\shiftpicture + 6, 5.75) -- (\shiftpicture + 1.25, 5.75);
\node at (\shiftpicture + 6.5, 3){{\scriptsize $\overline{z}^1$}};
\draw [thick, color=black] (\shiftpicture + 1, 5.5) -- (\shiftpicture + 0.5, 1) -- (\shiftpicture + 6, 3) -- (\shiftpicture + 4, 5) -- (\shiftpicture + 1, 5.5);
\node at (\shiftpicture + 6.5, 6){{\scriptsize $\overline{z}^2$}};

\foreach \y in {2,...,5}
	\draw[color = black] (\shiftpicture + 1, \y) circle [radius=\circlerad pt];
	
\foreach \y in {1,...,5}
	\draw[color = black] (\shiftpicture + 2, \y) circle [radius=\circlerad pt];
		
\foreach \y in {2,...,5}
	\draw[color = black] (\shiftpicture + 3, \y) circle [radius=\circlerad pt];
			
\foreach \y in {3,...,5}
	\draw[color = black] (\shiftpicture + 4, \y) circle [radius=\circlerad pt];
			
\foreach \y in {3,...,5}
	\draw[color = black] (\shiftpicture + 5, \y) circle [radius=\circlerad pt];
	
\draw[color = black] (\shiftpicture + 6, 3) circle [radius=\circlerad pt];

\draw [thick, color=black, dashed] (\shiftpicture + 2-1, 2+2) -- (\shiftpicture + 2+1, 2-2) ;
\draw[fill,color = black] (\shiftpicture + 2, 2) circle [radius=\circlerad pt];
\end{tikzpicture}
\captionsetup{width=14cm}
\caption{Illustration of the difference between detecting critical scenarios in the continuous (left) and integer case with B{\&}B (right).} \label{fig.detection}
\end{figure}
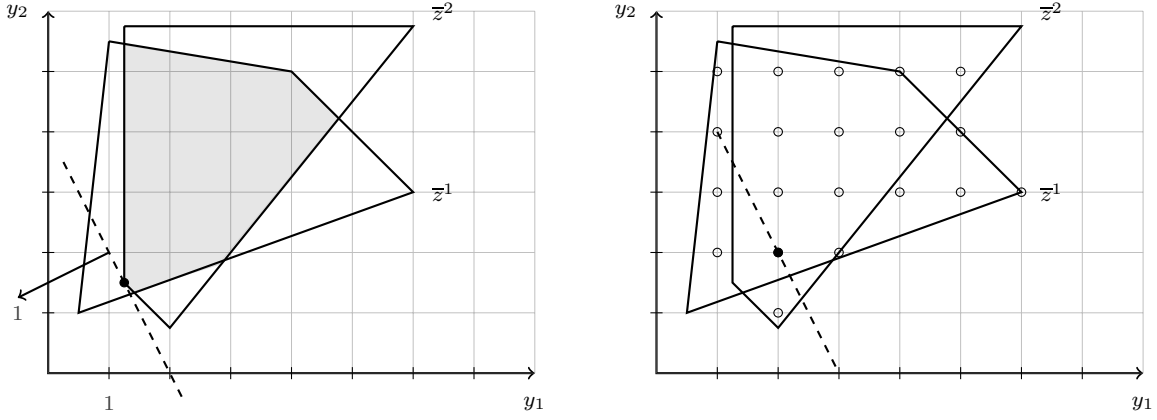
\end{example}

In this note, we propose a general approach for detecting sets $\overline{Z}_{r,s}$ of critical scenarios in adjustable RO problems with integer decision variables. In fact, we will identify these critical scenarios based on the dual solutions of the nodes in the B{\&}B tree used to solve the static mixed-integer RO problem (\ref{eq:two_period_after_general_splitting}). In Section~\ref{sec.BBsplitting} we discuss the theory behind our approach.

Throughout the remainder of this note we make the following mild assumption.
\begin{assumption} \label{assumption.nice.one}
The problem ($\text{RO}_0$) is feasible and the feasible region of its LP relaxation is non-empty and bounded.
\end{assumption}
\section{Critical scenario detection using B{\&}B} \label{sec.BBsplitting}
In this section we show how to detect critical scenarios $\overline{Z}_{r,s}$ for the static mixed-integer RO problem (\ref{eq:two_period_after_general_splitting}) after $r$ splitting rounds. Let $\overline{t}^{r,n}$ denote the objective value at each node $n \in \mathcal{N}_r$ of the B{\&}B tree used to solve (\ref{eq:two_period_after_general_splitting}). We will show that we can use the optimal dual variables at each node $n \in \mathcal{N}_r$ with $\overline{t}^{r,n} \geq \overline{t}^r$ to construct sets of critical scenarios $\overline{Z}_{r,s}$. In fact, we will show that it suffices to only consider nodes $n$ in a so-called \emph{critical cutset} $\mathcal{O}_r$ of the B{\&}B tree. This is a set of nodes that separates the root node from the leaf nodes in the B{\&}B tree, see Figure~\ref{fig.critical.nodes}. 

\begin{figure}
\centering
\begin{tikzpicture}[sibling distance=10em]
\tikzset{style1/.style={shape=rectangle, rounded corners,
    draw, align=center}}
\tikzset{style2/.style={shape=rectangle, rounded corners,
    draw, align=center, fill=gray!20}}
  \node [style1] {$n=0$, $\overline{t}^{r,0} < \overline{t}^r$}
    child { node [style2] {$n=1$, $\overline{t}^{r,1} = \overline{t}^r$} }
    child { node [style1] {$n=2$, $\overline{t}^{r,2} < \overline{t}^r$}
      child { node [style2] {$n=3$, $\overline{t}^{r,3} \geq \overline{t}^r$}
        child { node [style1] {$n=5$, $\overline{t}^{r,5} \geq \overline{t}^r$} }
        child { node [style1] {$n=6$, $\overline{t}^{r,6} \geq \overline{t}^r$} }
        child { node [style1] {$n=7$, $\overline{t}^{r,7} \geq \overline{t}^r$} } }
      child { node [style2] {$n=4$, $\overline{t}^{r,4} \geq \overline{t}^r$} } };
\end{tikzpicture}
\captionsetup{width=14cm}
\caption{An example of a critical cutset set $\mathcal{O}_r = \{ 1, 3, 4\}$ (in gray) of nodes in a B{\&}B tree.}
\label{fig.critical.nodes}
\end{figure}
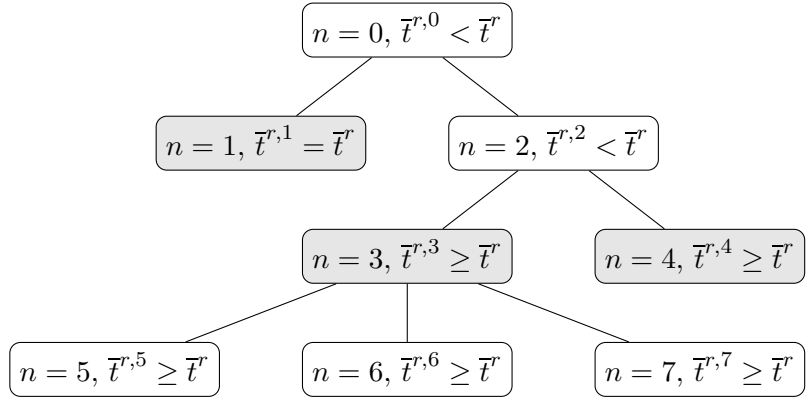

\begin{definition} \label{definition.cutset}
Let $\mathcal{N}_r$ denote the nodes of the B{\&}B tree used to solve (\ref{eq:two_period_after_general_splitting}). Then, $\mathcal{O}_r \subset \mathcal{N}_r$ is called a \emph{critical cutset} of the B{\&}B tree if
\begin{enumerate}
\item[(i)] $\overline{t}^{r,n} \geq \overline{t}^r$ for all $n \in \mathcal{O}_r$,
\item[(ii)] $\mathcal{O}_r \cap \Pi(n) \neq \emptyset$ for all $n \in \Lambda_r$,
\end{enumerate}
where $\Pi(n)$ represents the path from the root node to the leaf node $n \in \Lambda_r$ in the B{\&}B tree.  
\end{definition}
In Section \ref{sec.strong.lp.duality.node} we construct the primal and dual LPs corresponding to each node $n \in {\cal N}_r$ of the B{\&}B tree, and we show that for each node $n$ strong LP duality holds. In Section 3.2 we use the optimal dual variables of nodes $n \in \mathcal{O}_r$ in a critical cutset to construct sets of critical scenarios $\overline{Z}_{r,s}$ and we prove that if these sets are not split after round $r'$, then the worst-case objective value does not improve and thus $\overline{t}^{r'} = \overline{t}^r$ for $r' \geq r'$.
\subsection{Strong LP duality at B\&B nodes} \label{sec.strong.lp.duality.node}
At each node $n \in \mathcal{N}_r$ of the B{\&}B tree used to solve (\ref{eq:two_period_after_general_splitting}), we solve the LP relaxation of (\ref{eq:two_period_after_general_splitting}) with several additional branching constraints of the form $x_k^r \leq \delta_k^r$ or $y_l^{r,s} \geq \delta_{l}^{r,s}$. The problem that we solve at the $n$-th node of the tree equals:
\begin{align}
\overline{t}^{r,n} :=  \min\limits_{t^{r,n}, x^{r,n},y^{r,n,s}} \ & t^{r,n} \label{eq.splitted.problem.per.node} \tag{$\text{RO}_{r,n}$}\\
\text{s.t.} \ & t^{r,n} - c(z)^\top x^{r,n} - q(z)^\top y^{r,n,s} \geq 0 && \forall z \in Z_{r,s} && \forall s \in \mathcal{S}_r  \nonumber \\
& a_i(z)^\top x^{r,n} + w_i(z)^\top y^{r,n,s} \geq b_i && \forall z \in Z_{r,s}, && \forall i \in \mathcal{I}, \  \forall s \in \mathcal{S}_r  \nonumber \\
& (d^{r,n}_j)^\top x^{r,n} + \sum\limits_{s \in \mathcal{S}_r } (e^{r,n,s}_j)^\top y^{r,n,s} \geq \delta^{r,n}_j &&&& \forall  j \in \mathcal{J}_{r,n} \nonumber \\
& x^{r,n} \geq 0, \,\, y^{r,n,s} \geq 0  &&&&    \forall s \in \mathcal{S}_r, \nonumber
\end{align}
where the last constraints represent the branching constraints.
\begin{remark}
We present the branching constraints in (\ref{eq.splitted.problem.per.node}) in such a general form since our results also hold in the more general framework of disjunctive programming. However, we prefer to present our results in the context of B{\&}B. 
\end{remark} 
Problem (\ref{eq.splitted.problem.per.node}) is not a standard LP since it has infinitely many constraints. However, since $Z$ is a non-empty and bounded polyhedron by Assumption~\ref{assumption.polyhedron}, we are able to derive its robust counterpart:
\begin{align}
\overline{t}^{r,n} = \min\limits_{t^{r,n}, x^{r,n}, y^{r,n,s}, \kappa_i^{r,n,s}} \ & t^{r,n} \label{eq.initial.problem.rc1} \tag{$\text{P-RC}_{r,n}$} \\
\text{s.t.} \ & t^{r,n} - \overline{c}^\top x^{r,n} - \overline{q}^\top y^{r,n,s} - (p^{r,s})^\top \kappa_0^{r,n,s} \geq 0 && \forall s \in \mathcal{S}_r \nonumber \\
& \overline{a}_i^\top x^{r,n} + \overline{w}_i^\top y^{r,n,s} - (p^{r,s})^\top \kappa_i^{r,n,s} \geq b_i && \forall i \in \mathcal{I}, \ \forall s \in \mathcal{S}_r  \nonumber \\
& (d^{r,n}_j)^\top x^{r,n} + \sum\limits_{s \in \mathcal{S}_r } (e^{r,n,s}_j)^\top y^{r,n,s} \geq \delta^{r,n}_j && \forall  j \in \mathcal{J}_{r,n} \nonumber \\
& C^\top x^{r,n} + Q^\top y^{r,n,s} - (P^{r,s})^\top \kappa_0^{r,n,s} = 0 &&  \forall s \in \mathcal{S}_r  \nonumber \\
& A_i^\top x^{r,n} + W_i^\top y^{r,n,s} + (P^{r,s})^\top \kappa_i^{r,n,s} = 0 && \forall i \in \mathcal{I}, \ \forall s \in \mathcal{S}_r  \nonumber \\
&x^{r,n} \geq 0, \,\, y^{r,n,s} \geq 0, \,\, \kappa_i^{r,n,s} \geq 0  &&  \forall i \in \mathcal{I} \cup \{ 0 \}, \ \forall s \in \mathcal{S}_r, \nonumber
\end{align}
where $\kappa^{r,n,s}_i$, $i \in \mathcal{I} \cup \{ 0 \}$ represent additional variables required to move from robust constraints in (\ref{eq.splitted.problem.per.node}), that hold for all $z \in Z_{r,s}$ to their robust counterparts in (\ref{eq.initial.problem.rc1}). This robust counterpart is an LP since $Z_{r,s}$ is a non-empty polyhedral uncertainty set for every $s \in \mathcal{S}_r$, and its dual is given by: 
\begin{align}
\max\limits_{\lambda^{r,n,s}_i, u^{r,n,s}_i \mu^{r,n}_j} \ & \sum\limits_{s \in \mathcal{S}_r } \sum\limits_{i \in \mathcal{I}} \lambda_{i}^{r,n,s} b_i + \sum\limits_{j \in \mathcal{J}_{r,n}} \mu^{r,n}_j \delta^{r,n}_j  \label{eq.splitted.problem.per.node.dual} \tag{$\text{D-RC}_{r,n}$} \\
\text{s.t.} \ & \sum\limits_{s \in \mathcal{S}_r } \lambda^{r,n,s}_0 = 1   \nonumber \\
& \sum\limits_{s \in \mathcal{S}_r } ( \lambda^{r,n,s}_0 \overline{c} +C u^{r,n,s}_0 ) - \sum\limits_{i \in \mathcal{I}} \sum\limits_{s \in \mathcal{S}_r } ( \lambda^{r,n,s}_i \overline{a}_i + A_i u^{r,n,s}_i ) -\sum\limits_{j \in \mathcal{J}_{r,n}} \mu^{r,n}_j d^{r,n}_j \geq 0 \nonumber \\
&  \lambda^{r,n,s}_0 \overline{q} + Q u^{r,n,s}_0 - \sum\limits_{i \in \mathcal{I}} ( \lambda^{r,n,s}_i \overline{w}_i + W_i u^{r,n,s}_i ) - \sum\limits_{j \in \mathcal{J}_{r,n}} \mu^{r,n}_j e^{r,n,s}_j \geq 0, \quad \forall s \in \mathcal{S}_r    \nonumber\\
& P^{r,s} u^{r,n,s}_i \leq \lambda^{r,n,s}_i p^{r,s}, \quad \forall i \in \mathcal{I} \cup \{ 0 \},\qquad \qquad \qquad \forall s \in \mathcal{S}_r   \nonumber \\
& \lambda^{r,n,s}_i \geq 0, \,\, \mu^{r,n,s}_j \geq 0, \quad \forall i \in \mathcal{I} \cup \{ 0 \},\qquad \qquad \qquad \forall  j \in \mathcal{J}_{r,n}, \quad \forall s \in \mathcal{S}_r. \nonumber
\end{align}
\begin{proposition}
Under Assumptions \ref{assumption.nonempty.bounded}-\ref{assumption.nice.one}, strong LP duality holds between (\ref{eq.initial.problem.rc1}) and (\ref{eq.splitted.problem.per.node.dual}) for each node $n \in \mathcal{N}_r$ of the B{\&}B tree after splitting round $r$. \label{prop.stong.LP.duality}
\end{proposition}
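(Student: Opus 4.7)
The proposition asserts strong LP duality between two explicit finite-dimensional LPs, so the natural strategy is to invoke the classical LP strong duality theorem. The only real work lies in verifying the hypotheses under which that theorem applies.

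I would proceed in three steps. First, I would note that both (\ref{eq.initial.problem.rc1}) and (\ref{eq.splitted.problem.per.node.dual}) are bona fide LPs with finitely many variables and constraints: this was already recorded in the paper after the robust counterpart derivation, because each $Z_{r,s}$ is a non-empty polyhedron contained in the bounded set $Z$ of Assumption \ref{assumption.nonempty.bounded}, which allows the standard LP dualization of the inner maximization over $Z_{r,s}$. Second, I would verify that the primal (\ref{eq.initial.problem.rc1}) admits an optimal solution. For feasibility, given any feasible $(t^0, x^0, y^0)$ of the LP relaxation of $(\text{RO}_0)$ (which exists by Assumption \ref{assumption.nice.one}), I would set $x^{r,n} = x^0$ and $y^{r,n,s} = y^0$ for every $s \in \mathcal{S}_r$, and then compute $\kappa_i^{r,n,s}$ by dualizing the inner maximization of the robust constraints over each $Z_{r,s}$. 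When the branching constraints at node $n$ are compatible with some such extension, the node is feasible; otherwise the node is infeasible, fathomed in B\&B, and $\bar{t}^{r,n} = +\infty$. In the latter case strong duality reduces to dual unboundedness, which I would obtain from a Farkas certificate of primal infeasibility combined with the normalization $\sum_{s \in \mathcal{S}_r} \lambda_0^{r,n,s} = 1$ in (\ref{eq.splitted.problem.per.node.dual}).

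For lower boundedness of the primal objective, I would observe that every primal-feasible tuple $(x^{r,n}, \{y^{r,n,s}\}_s)$ induces, via $y(z) := y^{r,n,s}$ whenever $z \in Z_{r,s}$, a piecewise constant decision rule feasible for the original adjustable problem (\ref{eq.initial.problem}). Consequently $\bar{t}^{r,n} \geq \bar{t}$, and the right-hand side is finite under Assumptions \ref{assumption.nonempty.bounded}--\ref{assumption.nice.one}, since it is bounded below using compactness of $Z$ and the affine (hence continuous) dependence of $c, q$ on $z$. Once feasibility and lower boundedness of $t^{r,n}$ are in hand, the classical LP strong duality theorem delivers the claimed equality of optimal values.

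The main obstacle I anticipate is a clean treatment of the auxiliary $\kappa$-variables and the many $y^{r,n,s}$-copies: the feasible region of (\ref{eq.initial.problem.rc1}) in these variables is not automatically bounded, since each $Z_{r,s} \subseteq Z$ means the robust constraints on $y^{r,n,s}$ are weaker than those on $y$ in $(\text{RO}_0)$, so Assumption \ref{assumption.nice.one} does not directly transfer. What saves the argument is that only lower boundedness of the scalar objective $t^{r,n}$ is actually required, and this is delivered by the comparison $\bar{t}^{r,n} \geq \bar{t}$ above. A minor bookkeeping point is that the branching constraints are written in general linear form to accommodate the disjunctive-programming framework mentioned in the remark following (\ref{eq.splitted.problem.per.node}), but standard B\&B bound constraints are a special case and the argument is unchanged.
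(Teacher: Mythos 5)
Your overall strategy (reduce everything to classical strong duality for the finite primal--dual pair) is reasonable, but there are two genuine gaps. The more serious one is your treatment of nodes where (\ref{eq.initial.problem.rc1}) is infeasible. You propose to conclude dual unboundedness from a Farkas certificate of primal infeasibility, but such a certificate only supplies an improving recession direction for (\ref{eq.splitted.problem.per.node.dual}); to conclude that the dual is \emph{unbounded} rather than \emph{infeasible} you still need a dual feasible point, and this is exactly what must be ruled out, since strong duality genuinely fails when primal and dual are both infeasible. The paper closes this by proving dual feasibility at every node: it first establishes that ($\text{D-RC}_{0,0}$) is feasible (the root-node primal of round $0$ is feasible with a bounded feasible region by Assumption~\ref{assumption.nice.one}, hence has a finite value, hence its dual is feasible), argues that the root-node dual remains feasible for every round $r$ because the ($\text{RO}_0$) solution lifts to ($\text{RO}_r$) by reusing the same $y$-values on every subset, and then observes that the branching constraints only \emph{enlarge} the dual feasible region (each one merely adds a variable $\mu^{r,n}_j \geq 0$ that can be set to zero), so dual feasibility propagates from the root to every node $n$. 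With dual feasibility in hand, the feasible-primal and infeasible-primal cases are covered uniformly, and no separate boundedness argument for the primal is needed.

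The second gap is your lower bound $\overline{t}^{r,n} \geq \bar{t}$. The node problem (\ref{eq.splitted.problem.per.node}) carries no integrality restrictions, so a primal-feasible tuple induces a decision rule feasible only for the continuous relaxation of (\ref{eq.initial.problem}), not for (\ref{eq.initial.problem}) itself; and the finiteness of that relaxation's optimal value does not follow from compactness of $Z$ and affineness of the costs alone, because the adjustable variables $y(z)$ are not bounded by those assumptions. You correctly sensed that Assumption~\ref{assumption.nice.one} ``does not directly transfer,'' but the comparison you substitute for it does not repair this. The dual-feasibility route above makes the issue moot: weak duality against any dual feasible point already bounds the primal objective below.
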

\begin{proof}{Proof}
Problems (\ref{eq.initial.problem.rc1}) and (\ref{eq.splitted.problem.per.node.dual}) form a standard primal dual pair. From LP duality theory (see, \emph{e.g.}, \cite{Schrijver1986}), it follows that strong LP duality holds unless both the primal and dual problem are infeasible. Thus to prove the claim it suffices to show that either the primal or dual is feasible.

Consider the static mixed-integer RO problem (\ref{eq:two_period_after_general_splitting}) with $r = 0$. By Assumption~\ref{assumption.nice.one} this problem is feasible, and the feasible region of its LP-relaxation is non-empty and bounded. The LP relaxation can be interpreted as ($\text{RO}_{r,n}$) with $r = 0$ and $n = 0$ the root node of the B{\&}B tree used to solve ($\text{RO}_{0}$). Hence, under Assumption~\ref{assumption.nice.one}, ($\text{P-RC}_{0,0}$) has a non-empty, bounded feasible region and thus a finite objective value. By strong LP duality, the objective value of ($\text{D-RC}_{0,0}$) is also finite and its feasible region thus non-empty.

Using the same arguments as above, (\ref{eq.splitted.problem.per.node.dual}) is feasible at the root node $n = 0$ for any splitting round $r$, since (\ref{eq:two_period_after_general_splitting}) is feasible because the feasible solution for ($\text{RO}_{0}$) can be implemented for (\ref{eq:two_period_after_general_splitting}) after $r$ splitting rounds using the same $y$-values for all uncertainty subsets $s \in \mathcal{S}_r$ as in ($\text{RO}_{0}$). Moreover, the additional branching constraints in (\ref{eq.initial.problem.rc1}) for arbitrary $n$ restrict the primal feasible region, but enlarge the dual feasible region. Hence, (\ref{eq.splitted.problem.per.node.dual}) is feasible for any $r$ and $n$, and thus strong LP duality between (\ref{eq.initial.problem.rc1}) and (\ref{eq.splitted.problem.per.node.dual}) always holds. 
\end{proof}

\subsection{Critical scenarios}
Next, we discuss how to obtain critical scenarios from the dual variables of (\ref{eq.splitted.problem.per.node.dual}). Recall that  $Z_{r,s} = \{z: P^{r,s}z \leq p^{r,s}\}$ and that the optimal dual variables $(\overline{\lambda},\overline{u},\overline{\mu})$ of (\ref{eq.splitted.problem.per.node.dual}) satisfy $P^{r,s}\overline{u}_i^{r,n,s} \leq \overline{\lambda}_i^{r,n,s}p^{r,s}$ for $i \in I \cup \{0\}$. Hence, if $\overline{\lambda}_i^{r,n,s} > 0$, then
$$
P^{r,s} (\overline{u}^{r,n,s}_i / \overline{\lambda}_i^{r,n,s}) \leq p^{r,s} \ \Rightarrow \ (\overline{u}^{r,n,s}_i / \overline{\lambda}_i^{r,n,s}) \in Z_{r,s}.
$$
That is, the quotient $\overline{u}_i^{r,n,s}/\overline{\lambda}_i^{r,n,s}$ can be interpreted as a scenario from the uncertainty set $Z_{r,s}$. The set of all $\overline{u}^{r,n,s}_i / \overline{\lambda}_i^{r,n,s}$ for which $\overline{\lambda}_i^{r,n,s} > 0$, $i \in \mathcal{I} \cup \{ 0 \}$, will represent the set of critical scenarios $\overline{Z}_{r,n,s}$ in node $n$ corresponding to the uncertainty subset $Z_{r,s}$. However, we need to take into account the possibility that problem (\ref{eq.initial.problem.rc1}) is infeasible and problem (\ref{eq.splitted.problem.per.node.dual}) is unbounded, and thus no optimal dual solution exists. For this reason, we call any solution $(\overline{\lambda}, \overline{u}, \overline{\mu})$ optimal if its corresponding objective value in (\ref{eq.splitted.problem.per.node.dual}) exceeds $\overline{t}^{r}$.
\begin{definition}
For every node $n \in \mathcal{O}_r$ in a critical cutset of the nodes of the B{\&}B tree used for solving the static mixed-integer RO problem (\ref{eq:two_period_after_general_splitting}), we call $(\overline{\lambda},\overline{u},\overline{\mu})$ an optimal solution of (\ref{eq.splitted.problem.per.node.dual}) if $(\overline{\lambda},\overline{u},\overline{\mu})$ is feasible and its objective value exceeds $\overline{t}^r$.
\end{definition}
\begin{definition}
Let the splitting round $r$ be given and let $\mathcal{O}_r \subset \mathcal{N}_r$ be a critical cutset of the nodes of the B{\&}B tree used to solve the static mixed-integer RO problem (\ref{eq:two_period_after_general_splitting}). Then, for each $n \in \mathcal{O}_r$ and $s \in \mathcal{S}_r$, the set of critical scenarios $\overline{Z}_{r,s,n}$ corresponding to uncertainty subset $Z_{r,s}$ in node $n$ is given by
\begin{align*}
\overline{Z}_{r,s,n} = \{ \overline{u}^{r,n,s}_i / \overline{z}^{r,n,s}_i: \ \overline{\lambda}_i^{r,n,s} > 0, \ i \in \mathcal{I} \cup \{ 0 \}  \}.
\end{align*}
Moreover, the set of \emph{critical scenarios} $\overline{Z}_{r,s}$ corresponding to the uncertainty subset $Z_{r,s}, s \in \mathcal{S}_r$, is 
$$
\overline{Z}_{r,s} = \bigcup_{n \in \mathcal{O}_r} \overline{Z}_{r,s,n}.
$$ \label{def.critical.scenarios}
\end{definition}
Now we are ready to prove our main theorem, which can be interpreted as the integer analogue of Theorem 1 in \cite{Postek2016}.
\begin{theorem} \label{thm.splitting.theorem}
Consider the static mixed-integer RO problem (\ref{eq:two_period_after_general_splitting}) under Assumptions~\ref{assumption.nonempty.bounded}-\ref{assumption.nice.one}, and assume that we solve this problem using B{\&}B. Let $\mathcal{O}_r \subset \mathcal{N}_r$ denote a critical cutset of the nodes of the B{\&}B tree used to solve (\ref{eq:two_period_after_general_splitting}). Then, for any refinement ${\cal Z}_{r'}$ of ${\cal Z}_r$ for which for every $s \in \mathcal{S}_r $, there exists $s' \in \mathcal{S}_{r'}$ such that 	
\begin{align}
	\bigcup\limits_{n \in \mathcal{O}_r} \overline{Z}_{r,n,s} \subseteq Z_{r',s'}, \ \label{eq.scenario.inclusion}
\end{align}	
we have  $\overline{t}^{r'} = \overline{t}^r$.  That is, the objective function value does not improve. %
\end{theorem}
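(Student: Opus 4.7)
The plan is to prove the two inequalities $\overline{t}^{r'} \leq \overline{t}^r$ and $\overline{t}^{r'} \geq \overline{t}^r$ separately. The first is immediate: since $\mathcal{Z}_{r'}$ refines $\mathcal{Z}_r$, every $s' \in \mathcal{S}_{r'}$ sits inside a unique $s(s') \in \mathcal{S}_r$, so the assignment $y^{r',s'} := y^{r,s(s')}$ extends any optimal solution of $(\text{RO}_r)$ to a feasible solution of $(\text{RO}_{r'})$ with the same worst-case objective value. The substantive direction is $\overline{t}^{r'} \geq \overline{t}^r$, which I would establish by lifting the dual certificates at the critical cutset of the given B\&B tree to a mirrored B\&B tree for the refined problem.

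To set up the mirrored tree $\mathcal{N}_{r'}$ for $(\text{RO}_{r'})$, I would copy every $x$-branching constraint of $\mathcal{N}_r$ verbatim and translate each branching constraint on $y^{r,s}$ encountered at a node $n \in \mathcal{N}_r$ into the same inequality imposed only on $y^{r',s'(s)}$, leaving $y^{r',s'}$ unconstrained at that node for $s' \neq s'(s)$, where $s'(s) \in \mathcal{S}_{r'}$ is the refined index provided by (\ref{eq.scenario.inclusion}) satisfying $\bigcup_{n \in \mathcal{O}_r} \overline{Z}_{r,n,s} \subseteq Z_{r',s'(s)}$. Writing $n'$ for the image of $n \in \mathcal{N}_r$ and setting $\mathcal{O}_{r'} := \{n' : n \in \mathcal{O}_r\}$, the cutset property~(ii) of Definition~\ref{definition.cutset} carries over verbatim from $\mathcal{O}_r$ to $\mathcal{O}_{r'}$.

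The core step is a dual lift. For each $n \in \mathcal{O}_r$, take an optimal dual solution $(\overline{\lambda},\overline{u},\overline{\mu})$ of (\ref{eq.splitted.problem.per.node.dual}) with objective at least $\overline{t}^r$ (which exists by Proposition~\ref{prop.stong.LP.duality} together with the paper's extended notion of optimality in case the primal is infeasible) and define a candidate dual solution $(\lambda',u',\mu')$ of the refined dual at node $n'$ by $\lambda_i^{r',n',s'(s)} := \overline{\lambda}_i^{r,n,s}$, $u_i^{r',n',s'(s)} := \overline{u}_i^{r,n,s}$ for every $s \in \mathcal{S}_r$ and $i \in \mathcal{I} \cup \{0\}$, zero for every other $(i,s')$, and $\mu' := \overline{\mu}$. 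The normalization constraint and the aggregated $x$-constraint carry over because the sums over $s'$ collapse back to the original sums over $s$; the per-$s'$ constraint coincides with the original per-$s$ constraint when $s' = s'(s)$ and reduces to $0 \geq 0$ otherwise, since the mirrored branching contributes no $e$-coefficient at the extraneous $s'$. The heart of the argument is the inclusion constraint $P^{r',s'} u_i' \leq \lambda_i' p^{r',s'}$: for $\overline{\lambda}_i^{r,n,s} > 0$ it reduces, after dividing, to $\overline{u}_i^{r,n,s}/\overline{\lambda}_i^{r,n,s} \in Z_{r',s'(s)}$, which is exactly hypothesis (\ref{eq.scenario.inclusion}); for $\overline{\lambda}_i^{r,n,s} = 0$, the original dual feasibility $P^{r,s} \overline{u}_i^{r,n,s} \leq 0$ combined with boundedness of $Z_{r,s}$ (Assumption~\ref{assumption.nonempty.bounded}, which rules out nonzero recession directions) forces $\overline{u}_i^{r,n,s} = 0$, so the lifted constraint holds trivially.

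Since the lifted dual retains the original objective value and this value is at least $\overline{t}^r$, LP weak duality applied to (\ref{eq.initial.problem.rc1}) yields $\overline{t}^{r',n'} \geq \overline{t}^r$ for every $n' \in \mathcal{O}_{r'}$. Because LP values are monotone non-decreasing along every root-to-leaf path in a B\&B tree and $\mathcal{O}_{r'}$ is a cutset of $\mathcal{N}_{r'}$, every leaf inherits an LP value $\geq \overline{t}^r$, and hence $\overline{t}^{r'} \geq \overline{t}^r$. The main technical obstacle I anticipate is the feasibility verification of the lift---in particular, making precise that the mirrored branching leaves no $e$-coefficient at the extraneous indices $s' \neq s'(s)$ and carefully handling the degenerate case $\overline{\lambda}_i^{r,n,s} = 0$, which is the one place where Assumption~\ref{assumption.nonempty.bounded} is genuinely required.
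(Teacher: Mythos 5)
Your proposal is correct and follows essentially the same route as the paper: the easy direction via refinement, and the hard direction by imposing each cutset node's branching constraints on the refined problem, lifting the round-$r$ optimal dual solution to a feasible dual of the refined node problem (with the scenario-inclusion hypothesis handling the $P^{r',s}u \leq \lambda p^{r',s}$ constraints and boundedness of $Z$ handling the $\overline{\lambda}^{r,n,s}_i = 0$ case), and invoking weak duality together with the cutset covering property. The only cosmetic difference is that the paper works directly with the subproblems $(\text{RO}_{r',n})$ for $n \in \mathcal{O}_r$ rather than constructing an explicit mirrored tree, but the underlying argument is identical.
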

\begin{proof}{Proof}
Assume w.l.o.g.\ that $\mathcal{S}_r \subset \mathcal{S}_{r'}$ and that the splitted sets in ${\cal Z}_{r'}$ are indexed such that $\overline{Z}_{r,n,s} \subseteq Z_{r',s}$ for all $n \in {\cal O}_r$, $s \in \mathcal{S}_r$. Since ${\cal Z}_{r'}$ is a refinement of ${\cal Z}_r$ it follows immediately that $\overline{t}^{r'} \leq \overline{t}^r$. It remains to show that also $\overline{t}^{r'} \geq \overline{t}^r$ holds. We will do so by proving that $\overline{t}^r$ is a lower bound for ($\text{RO}_{r'}$), the robust optimization problem after $r'$ rounds of splitting, using the B\&B tree of the $r$-th splitting round. Indeed, for each node $n \in \mathcal{O}_r$ we can consider ($\text{RO}_{r'}$) with the additional branching constraints from node $n \in \mathcal{O}_r$ (and without integrality restrictions):
\begin{align}
	\overline{t}^{r',n} := \min\limits_{t^{r',n}, x^{r',n}, y^{r',n,s}} \ & t^{r',n} \label{eq.rplus1.problem.primal} \tag{$\text{RO}_{r',n}$} \\
	\text{s.t.} \ & t^{r',n} - c(z)^\top x^{r',n} - q(z)^\top y^{r',n,s} \geq 0 && \forall z \in Z_{r',s} && \forall s \in \mathcal{S}_{r'} \nonumber \\
	& a_i(z)^\top x^{r',n} + w_i(z)^\top y^{r',n,s} \geq b_i  && \forall z \in Z_{r',s} && \forall i \in \mathcal{I}, \ \forall s \in \mathcal{S}_{r'} \nonumber \\
	& (d^{r,n}_j)^\top x^{r',n} +  \sum\limits_{s \in \mathcal{S}_r } (e^{r,n,s}_j)^\top y^{r',n,s} \geq \delta^{r',n}_j &&&& \forall  j \in \mathcal{J}_{r,n} \label{eq:sum_to_S_r} \\
	& x^{r,n} \geq 0, \,\, y^{r,n,s} \geq 0  &&&&    \forall s \in \mathcal{S}_{r'}. \nonumber
\end{align}	
Observe that the summation in (\ref{eq:sum_to_S_r}) runs over $s \in \mathcal{S}_r $, which means that branching conditions are only added to decision variables that were also present in round $r$. Moreover, since $\mathcal{O}_r$ is a critical cutset satisfying (i) and (ii) in Definition~\ref{definition.cutset}, it follows that the optimal solution to the problem ($\text{RO}_{r'}$) is feasible for at least one node $n \in \mathcal{O}_r$, and thus the minimum objective value of (\ref{eq.rplus1.problem.primal}) over all nodes $n \in \mathcal{O}_r$ yields a lower bound for $\overline{t}^{r'}$:
$$
\min_{n \in \mathcal{O}_r} \overline{t}^{r',n} \leq \overline{t}^{r'}.
$$	
Next, we will use the dual problems of (\ref{eq.splitted.problem.per.node}) and (\ref{eq.rplus1.problem.primal}) and the fact that $\overline{t}^{r,n} \geq \overline{t}^r$ for all $n \in \mathcal{O}_r$ to prove that $\overline{t}^{r} \leq \overline{t}^{r',n}$ for every $n \in \mathcal{O}_r$, and thus
\begin{equation}
\overline{t}^r \leq \min_{n \in \mathcal{O}_r} \overline{t}^{r',n} \leq \overline{t}^{r'}. \label{eq:min_t}
\end{equation}	
After obtaining the robust counterpart (\ref{eq.initial.problem.rc1}) of (\ref{eq.splitted.problem.per.node}), its dual is given by (\ref{eq.splitted.problem.per.node.dual}) and by Proposition~\ref{prop.stong.LP.duality}, strong LP duality holds between the two. Similarly, the dual of (\ref{eq.rplus1.problem.primal}) is equivalent to:
\begin{align}	
\max\limits_{\lambda^{r',n,s}_i, u^{r',n,s}_i, \mu^{r',n}_j} \ & \sum\limits_{s\in \mathcal{S}_{r'}} \sum\limits_{i \in \mathcal{I}} \lambda_{i}^{r',n,s} b_i + \sum\limits_{j \in \mathcal{J}_{r,n}} \mu^{r',n}_j \delta^{r,n}_j  \label{eq.rplus1.problem.dual}  \tag{$\text{D-RC}_{r',n}$}\\
\text{s.t.} \ & \sum\limits_{s\in \mathcal{S}_{r'}} \lambda^{r',n,s}_0 = 1 &&  \nonumber \\
& \sum\limits_{s\in \mathcal{S}_{r'}} (\lambda^{r',n,s}_0 \overline{c} + C u^{r',n,s}_0 ) - \sum\limits_{i \in \mathcal{I}} \sum\limits_{s\in \mathcal{S}_{r'}} (\lambda^{r',n,s}_i \overline{a}_i + A_i u^{r',n,s}_i )  \nonumber \\
\qquad \qquad & - \sum\limits_{  j \in \mathcal{J}_{r,n}} \mu^{r',n}_j d^{r,n}_j \geq 0 \nonumber \\
& \lambda^{r',n,s}_0 \overline{q} + Q u^{r',n,s}_0 - \sum\limits_{i \in \mathcal{I}} (\lambda^{r',n,s}_i \overline{w}_i + W_i u^{r',n,s}_i ) \nonumber \\ 
& \qquad \qquad - \sum\limits_{j \in \mathcal{J}_{r,n}} \mu^{r',n}_j e^{r,n,s}_j \geq 0, \quad \forall s \in \mathcal{S}_r  \nonumber\\
& \lambda^{r',n,s}_0 \overline{q} + Q u^{r',n,s}_0 - \sum\limits_{i \in \mathcal{I}} ( \lambda^{r',n,s}_i \overline{w}_i + W_i u^{r',n,s}_i ) \geq 0, \qquad \forall s \in \mathcal{S}_{r'} \setminus  \mathcal{S}_r    \nonumber\\
& P^{r',s} u^{r',n,s}_i \leq \lambda^{r',n,s}_i p^{r',s}, \quad \forall i \in \mathcal{I} \cup \{ 0 \}, \qquad \qquad  \qquad \qquad \forall s \in \mathcal{S}_{r'}  \label{eq:Prs} \\
& \lambda^{r',n,s}_i \geq 0,\,\, \mu^{r',n}_j \geq 0, \quad \forall i \in \mathcal{I} \cup \{ 0 \}, \qquad \qquad \qquad \forall j \in \mathcal{J}_{r,n}, \quad \forall s \in \mathcal{S}_{r'}. \nonumber
\end{align}	
Observe that it	is possible to select $u^{r',n,s} = 0$ and $\lambda^{r',s} = 0$ for all $s \in \mathcal{S}_{r'} \setminus \mathcal{S}_{r}$ to obtain the same dual as in (\ref{eq.splitted.problem.per.node.dual}) except that $P^{r',s}$ and $p^{r',s}$ in the constraints in (\ref{eq:Prs}) refer to the splitted uncertainty sets in $\mathcal{Z}_{r'}$, whereas in (\ref{eq.splitted.problem.per.node.dual}) they refer to the uncertainty sets in ${\cal Z}_r$. These constraints, however, can be written in a different form, since the uncertainty sets $Z_{r',s}$ are bounded and thus $P^{r',s}z \leq 0 \Rightarrow z = 0$. Hence, if $\lambda_i^{r',n,s} = 0$, then $u_i^{r',n,s} = 0$, and if $\lambda_i^{r',n,s} > 0$, then the constraint reduces to
$$
P^{r',s}\left(\frac{u_i^{r',n,s}}{\lambda_i^{r',n,s}}\right) \leq p^{r',s} \quad \Leftrightarrow \quad \frac{u_i^{r',n,s}}{\lambda_i^{r',n,s}} \in Z^{r',s}.
$$
Using this alternative form, and since  $\bigcup\limits_{n \in \mathcal{O}_r} \overline{Z}_{r,n,s} \subseteq Z_{r',s}$ for all $s \in \mathcal{S}_r $, it is not hard to verify that the optimal dual solutions of (\ref{eq.splitted.problem.per.node.dual}) can be used to construct a feasible solution to (\ref{eq.rplus1.problem.dual}) for every $ i \in \mathcal{I} \cup \{ 0 \}$ and  $j \in \mathcal{J}_{r,n}$:
\begin{align*}
	\lambda_i^{r',n,s} & := \left\{ \begin{array}{ll} \overline{\lambda}_i^{r,n,s} & \text{if } s \in \mathcal{S}_r\ \text{and}\ \overline{\lambda}_i^{r,n,s} > 0, \\ 0 & \text{otherwise,} \end{array} \right. \\
	u_i^{r',n,s} & := \left\{ \begin{array}{ll} \overline{u}_i^{r,n,s} & \text{if } s \in \mathcal{S}_r \ \text{and} \ \overline{\lambda}_i^{r,n,s} > 0 \\ 0 & \text{otherwise,} \end{array} \right. \\
	\mu^{r',n}_j & := \overline{\mu}^{r,n}_j.
\end{align*}
The objective value of this dual solution is at least $\overline{t}^r$ by definition of a critical cutset. Thus, for all $n \in \mathcal{O}_r$, we have by weak LP duality that $\overline{t}^{r',n} \geq \overline{t}^r$, and thus by (\ref{eq:min_t}) we conclude that $\overline{t}^r \leq \overline{t}^{r'}$. 
\end{proof}
Theorem~\ref{thm.splitting.theorem} shows how to detect critical scenarios to be split for mixed-integer adjustable RO problems in general. In Section~\ref{sec.experiment} we will show that this critical scenario detection method outperforms existing problem-specific heuristic methods. First, however, we remark that Theorem 1 implies a simple optimality criterion for when to stop splitting the uncertainty set.
\begin{corollary}
Let $\mathcal{O}_r \subset \mathcal{N}_r$ denote a critical cutset of the nodes of the B{\&}B tree used for solving (\ref{eq:two_period_after_general_splitting}), and suppose that
\begin{align*}
\left| \bigcup\limits_{n \in \mathcal{O}_r} \overline{Z}_{r,n,s} \right| \leq 1, \ \forall s \in \mathcal{S}_r.
\end{align*}
Then, for any refinement $Z_{r'}$ of $Z_r$ we have $\overline{t}^{r'} = \overline{t}^r$, and thus the objective value of the adjustable RO problem (\ref{eq.initial.problem}) equals $\overline{t}^r$. 
\end{corollary}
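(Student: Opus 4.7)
The plan is to derive the corollary almost entirely from Theorem~\ref{thm.splitting.theorem}. First I would verify the equality $\overline{t}^{r'} = \overline{t}^r$ for every refinement $\mathcal{Z}_{r'}$ of $\mathcal{Z}_r$. The hypothesis $\bigl|\bigcup_{n \in \mathcal{O}_r} \overline{Z}_{r,n,s}\bigr| \leq 1$ says that each of these unions is either empty or a singleton $\{z^\star_s\} \subseteq Z_{r,s}$. In either case the inclusion condition~(\ref{eq.scenario.inclusion}) required by Theorem~\ref{thm.splitting.theorem} is automatic: when the union is empty any choice of $s' \in \mathcal{S}_{r'}$ works, and when it equals $\{z^\star_s\}$ one simply picks any cell $Z_{r',s'}$ of the refinement that contains $z^\star_s$, whose existence follows because the cells of $\mathcal{Z}_{r'}$ cover $Z \supseteq Z_{r,s}$. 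Theorem~\ref{thm.splitting.theorem} then yields $\overline{t}^{r'} = \overline{t}^r$.

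For the second claim $\overline{t} = \overline{t}^r$, the bound $\overline{t} \leq \overline{t}^r$ is immediate: the optimal tuple $(x^r,\{y^{r,s}\}_{s\in\mathcal{S}_r})$ of~($\text{RO}_r$) induces the piecewise-constant policy $y(z) = y^{r,s}$ on $Z_{r,s}$, which is a feasible decision rule for~(\ref{eq.initial.problem}) attaining $\overline{t}^r$. For the reverse inequality I would argue by a limiting argument: apply the first part of the corollary to a sequence of successive refinements $\mathcal{Z}_{r_k}$ whose maximum cell diameter tends to zero (for instance by iteratively bisecting every cell along each coordinate direction of $z$). Each such refinement satisfies $\overline{t}^{r_k} = \overline{t}^r$, whereas piecewise-constant rules on sufficiently fine polyhedral partitions approximate an arbitrary adjustable policy to arbitrary accuracy, so $\overline{t}^{r_k} \to \overline{t}$, and passing to the limit forces $\overline{t}^r = \overline{t}$.

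The main obstacle is to make the convergence $\overline{t}^{r_k} \to \overline{t}$ rigorous in the mixed-integer setting, where the optimal wait-and-see response $y^\star(z)$ is generically discontinuous in $z$. Fortunately, Assumption~\ref{assumption.nonempty.bounded} together with the affine data of Assumption~\ref{assumption.polyhedron} forces the attainable integer components of $y^\star$ to range over a finite lattice set, so $Z$ decomposes into finitely many polyhedral level sets on which the integer part of $y^\star$ is constant. Refining $\mathcal{Z}_r$ so as to separate these level sets reduces the approximation claim to the continuous-variable splitting theory already established by~\cite{Postek2016}, which, combined with the invariance $\overline{t}^{r_k} = \overline{t}^r$, completes the argument.
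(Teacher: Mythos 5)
Your derivation of the first claim is correct and is exactly the intended route: under the hypothesis each union $\bigcup_{n\in\mathcal{O}_r}\overline{Z}_{r,n,s}$ is empty or a singleton contained in $Z_{r,s}\subseteq Z$, so condition (\ref{eq.scenario.inclusion}) is satisfied by every refinement and Theorem~\ref{thm.splitting.theorem} gives $\overline{t}^{r'}=\overline{t}^r$. (The paper states this corollary without a written proof, so that is the only comparison point.) The inequality $\bar{t}\leq\overline{t}^r$ for the adjustable problem (\ref{eq.initial.problem}) is also fine, since piecewise constant rules form a subclass of admissible decision rules.

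The gap is in the reverse inequality. You reduce it to the convergence claim $\overline{t}^{r_k}\to\bar{t}$ along partitions with vanishing cell diameter, but this claim is neither proved nor available off the shelf. The ``continuous-variable splitting theory'' you invoke from Postek and den Hertog is a non-improvement certificate (the analogue of Theorem~\ref{thm.splitting.theorem}), not a convergence result; convergence of partition-based piecewise constant rules to the fully adjustable optimum requires extra regularity (a Slater-type condition in Bertsimas and Dunning (2016)) and is delicate precisely in the mixed-integer setting you are treating. Your proposed fix via ``level sets'' of the optimal integer response does not close this: partition cells are closed polyhedra that must cover $Z$, so they cannot strictly separate adjacent level sets, and any cell meeting a common boundary must carry a single $y$ feasible on both sides of it, which the fully adjustable problem never requires. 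A direct argument that avoids convergence altogether is available and is what the ``thus'' in the statement points at: let $\hat{Z}_s$ be the at most one critical scenario of cell $s$ and consider the relaxation of (\ref{eq.initial.problem}) in which the constraints are imposed only for $z\in\bigcup_{s}\hat{Z}_s$, with a free $y^z\in Y$ for each such $z$; this finite-scenario problem lower-bounds $\bar{t}$, and rerunning the cutset/dual certificate from the proof of Theorem~\ref{thm.splitting.theorem} --- which charges only the single critical scenario of each cell, all of which are present in the scenario set --- shows its value is at least $\overline{t}^r$. Hence $\overline{t}^r\leq\bar{t}\leq\overline{t}^r$ with no limiting argument needed.
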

\begin{remark}
Due to the strong LP duality between problems (\ref{eq.initial.problem.rc1}) and (\ref{eq.splitted.problem.per.node.dual}), the critical scenarios can be obtained at no cost from the optimal dual multipliers of the LP problem if (\ref{eq.initial.problem.rc1}) is feasible. In a similar way, they can be obtained from the dual infeasibility ray of problem (\ref{eq.initial.problem.rc1}) if (\ref{eq.initial.problem.rc1}) is infeasible and (\ref{eq.splitted.problem.per.node.dual}) is unbounded. In this way, no additional optimization problems need to be solved to construct the set of critical scenarios.
\end{remark}
\section{Numerical experiment - route planning} \label{sec.experiment}
\subsection{Problem description}
To illustrate the potential benefits of our method we consider the route planning problem from \cite{Hanasusanto2015} and \cite{Postek2016}. This is a problem with uncertainty in the objective function to which the methodology of \cite{Postek2016} and \cite{Bertsimas2016} cannot be straightforwardly applied.

The problem is a shortest path problem defined on a directed graph $G = (V,A)$ with nodes $V = \{1,\ldots,N\}$, arcs $A \subseteq V \times V$, and uncertain weights $w_{ij}(z) \in \mathbb{R}_{+}$ for every arc $(i,j) \in A$. We assume that these arc weights are affine functions of the uncertain parameters $z \in Z$, where $Z$ is a polyhedral uncertainty set. The goal is to determine the length of the worst-case shortest path from a start node $b \in V$ to an end node $e \in V$ with $b \neq e$. This shortest path is determined after we observe the arc weights $w_{ij}(z)$, but its its worst-case length is determined before these arc weights are known. If the arc weights represent travel times, then this problem can be interpreted as a route planning problem in which we determine the maximum time required to travel from node $b$ to $e$.

The corresponding mixed-integer adjustable RO problem is given by:
\begin{align}
\label{eq:Route_planning} \tag{RPP}
\min\limits_{t, y(z)} \ & t \\
\text{s.t.} \ & t - \sum\limits_{(i,j) \in A} w_{ij}(z) y_{ij}(z)\geq 0, && \forall z \in Z \nonumber \\ 
& \sum\limits_{(j,l) \in A} y_{jl}(z) - \sum\limits_{(i,j) \in A } y_{ij}(z) \geq \mathbb{I}(j=b) - \mathbb{I}(j=e), \quad && \forall j \in V  \nonumber \\
& y_{ij}(z) \in \{0,1\} , && \forall z \in Z, \  \forall (i,j) \in A. \nonumber
\end{align}
Here, the binary variables $y_{ij}(z)$ are equal to 1 if arc $(i,j)$ is part of the shortest path from $b$ to $e$, and $\mathbb{I}(\cdot)$ denotes the indicator function.

Since this problem has uncertainty in the objective function only, the methodology of \cite{Postek2016} only generates a single critical scenario, giving no indication on how to split the uncertainty set. That is why they propose a problem-specific heuristic splitting rules that can only be applied to this route planning problem. In our numerical experiment we show that our general B\&B-based critical scenario detection method outperforms these splitting rules.
\subsection{Experimental design}
We generate random graphs with $N$ nodes, where the location of each node is uniformly sampled from $[0,10]^2$. The nodes between which the Euclidean distance is largest are designated as start and end node. Moreover, the arc set $A$ is obtained by removing the longest 70\% of arcs from a complete directed graph, as in \cite{Hanasusanto2015} and \cite{Postek2016}.

We assume that the arc weights $w_{ij}(z)$ are defined as
\[w_{ij}(z) = (1+z_{ij}/2)d_{ij},\]
where $d_{ij}$ represents the Euclidean distance between nodes $i$ and $j$, and $z$ is contained in the polyhedral uncertainty set
\[Z = \bigg\{z \in [0,1]^{|A|}: \sum_{(i,j) \in A} z_{ij} \leq B\bigg\}. \]
Thus, the arc weight $w_{ij}(z)$ may be between 100\% and 150\% of the distance $d_{ij}$ between the nodes. For the parameter $B$ in the uncertainty set we consider $B = 2, 3, 4$.

In our numerical experiment we compare the quality of the uncertainty set splits based on (i) the problem-specific method of \cite{Postek2016} and (ii) our new B{\&}B-based critical scenario detection method. For both methods, we split the uncertainty sets $Z_{r,s}$ for those $s \in \mathcal{S}_r$ for which the first constraint in (\ref{eq:two_period_after_general_splitting}) is active. That is,  we only focus on those uncertainty subsets that determine the worst-case objective value after the $r$-th splitting round. Each such set is split into two subsets using the bisecant plane between the two critical scenarios that are furthest apart from each other. A bisecant plane between two points $z$ and $z'$ is the hyperplane going through the point $(z + z') / 2$ with normal vector $z - z'$.

The idea behind the heuristic of \cite{Postek2016} is to find an alternative critical scenario $z$, so that $z$ and $z_{LP}$, obtained using the LP-relaxation, can be split. To make sure that $z$ differs substantially from $z_{LP}$ its optimal path $y(z)$ cannot use more than $100\theta\%$ of the arcs in the optimal path $y(z_{LP})$ corresponding to $z_{LP}$. Here, $0 \leq \theta \leq 1$ is a parameter that we can select. For details of this \emph{ad-hoc} method, see \cite{Postek2016}. Our new general critical scenario detection method identifies critical scenarios as explained in Definition~\ref{def.critical.scenarios}. In our numerical experiments we use the critical cutset $\mathcal{O}_r$ with smallest cardinality. 


\begin{remark}[Ex post correction] \label{remark.ex.post}
Since our route planning problem (\ref{eq:Route_planning}) only has uncertainty in the objective function, it is possible to apply an \emph{ex post correction} to the worst-case objective value after each round of splitting. The idea, not recognized in \cite{Postek2016}, is that the routes $y^{r,s}$ corresponding to uncertainty sets $Z_{r,s}$,  $s \in \mathcal{S}_r$, after $r$ rounds of splitting are feasible for any $z \in Z$. By selecting the best route among $y^{r,s}, s \in \mathcal{S}_r$, for every $z \in Z$, the worst-case objective value becomes
$$
\underline{t}_r := \max_{z \in Z} \min_{s \in \mathcal{S}_r} \sum_{(i,j) \in A} w_{ij}(z)^{\top}y_{ij}^{r,s}.
$$
This objective value $\underline{t}_r$ may be lower than $\bar{t}_r$ since $y^{r,s'}$ is not necessarily the best solution among $y^{r,s},  s \in \mathcal{S}_r$, for all $z \in Z_{r,s'}$. In our numerical experiment we apply this ex-post correction and show both values $\underline{t}_r$ and $\bar{t}_r$.
\end{remark}
\subsection{Results}
In Table \ref{tab.numerical.results.modified} we present results for a representative parameter set $N = 10, 20, 30, 40$, $\theta = 0, 0.5, 0.9$ and $B = 3$. For each value of $N$, the worst-case objective value improvement of $\text{RO}_r$ compared to the objective value of $\text{RO}_0$  is given, both for our B{\&}B scenario detection method and the problem-specific method of \cite{Postek2016} with $\theta = 0, 0.5, 0.9$. We report the worst-case objective value improvement after a single splitting round, \emph{i.e.}, when the uncertainty set is partitioned into two subsets and thus $|\mathcal{Z}_r| = 2$, and when $|\mathcal{Z}_r| = 10$.

\begin{table}
\centering
\scriptsize
\captionsetup{width=14cm}
\caption{Improvements (\%) in the worst-case objective function value. In bold result of the approach that performed best for a given  $N$ if its average outperformance was statistically significant at 0.95 confidence level.} \label{tab.numerical.results.modified}
\centerline{\begin{tabular}{l|cccc|cccc|cccc|cccc} \hline
& \multicolumn{4}{c|}{$N = 10$} & \multicolumn{4}{c|}{$N = 20$} & \multicolumn{4}{c|}{$N = 30$} & \multicolumn{4}{c}{$N = 40$} \\
$|\mathcal{Z}_r | = 2$ & B{\&}B & \multicolumn{3}{c|}{PdH (2016) $\theta$} & B{\&}B & \multicolumn{3}{c|}{PdH (2016) $\theta$}& B{\&}B & \multicolumn{3}{c|}{PdH (2016) $\theta$}& B{\&}B & \multicolumn{3}{c}{PdH (2016) $\theta$} \\ \hline
Correction & & $0$ & $0.5$ & $0.9$ & & $0$ & $0.5$ & $0.9$ & & $0$ & $0.5$ & $0.9$ & & $0$ & $0.5$ & $0.9$ \\ \hline
No & 1.10 & 0.24 & 0.82 & 0.87 & 2.84 & 2.45 & 2.34 & 0.98& 3.42 & 3.88 & 3.13 & 1.01& 2.95 & 4.22 & 2.34 & 0.90 \\ 
Ex post & 2.13 & 0.76 & 1.51 & 1.52 & 5.44 & 5.27 & 4.68 & 1.79& 6.16 & 6.75 & 5.09 & 1.91& 5.62 & 7.50 & 3.98 & 1.65 \\ \hline
& \multicolumn{4}{c|}{$N = 10$} & \multicolumn{4}{c|}{$N = 20$} & \multicolumn{4}{c|}{$N = 30$} & \multicolumn{4}{c}{$N = 40$} \\
$|\mathcal{Z}_r| = 10$ & B{\&}B & \multicolumn{3}{c|}{PdH (2016) $\theta$} & B{\&}B & \multicolumn{3}{c|}{PdH (2016) $\theta$}& B{\&}B & \multicolumn{3}{c|}{PdH (2016) $\theta$}& B{\&}B & \multicolumn{3}{c}{PdH (2016) $\theta$} \\ \hline
Correction & &  $0$ & $0.5$ & $0.9$ & & $0$ & $0.5$ & $0.9$ & & $0$ & $0.5$ & $0.9$ & & $0$ & $0.5$ & $0.9$ \\ \hline
No & \textbf{2.36} & 1.79 & 2.20 & 1.79 & 4.42 & 4.16 & 3.85 & 1.53& 5.30 & 5.49 & 4.61 & 1.88& 5.00 & \textbf{5.70} & 4.18 & 1.39 \\ 
Ex post & \textbf{3.22} & 3.02 & 3.14 & 2.34 & \textbf{8.41} & 7.21 & 6.36 & 2.91& \textbf{9.91} & 8.50 & 6.91 & 3.82& \textbf{11.27} & 8.60 & 7.56 & 3.34 \\ \hline 
\end{tabular}}
\end{table}

We observed that the results are very similar for $B = 2, 3, 4$, both for $|\mathcal{Z}_r| = 2$ and $|\mathcal{Z}_r| = 10$. The performance of the problem-specific heuristic, however, depends strongly on the parameter $\theta$, and is better for $\theta = 0$ than for $\theta = 0.5$ and $\theta = 0.9$. Moreover, the \emph{ex post} correction of the worst-case objective value discussed in Remark~\ref{remark.ex.post} also has a substantial impact. In all cases it leads to a major increase in objective value improvement. For example, for $N = 40$ and $B = 3$ the improvement of our B{\&}B scenario detection method is $5.00\%$ without and $11.27\%$ with \emph{ex post} correction.

Comparing the B{\&}B scenario detection method with the problem-specific method of \cite{Postek2016}, using the \emph{ex post} corrected results, the B{\&}B method outperforms the problem-specific method. In fact, for all $N$ the worst-case objective value improvement of the B{\&}B method is statistically significantly better than the problem-specific method for all values of $\theta$. This results is confirmed in Figure~\ref{eq.plot.improvements} where we show the worst-case objective improvements of both methods as a function of $| \mathcal{Z}_r |$, the number of subsets in which $Z$ is partitioned, for $N = 30$ and $B = 3$. Observe that the largest objective value improvements are from the initial splits of the uncertainty set. Moreover, the increase in the objective value improvement diminishes with the number of splits.

\begin{figure}
\centering
\begin{tikzpicture}[scale = 0.8]

\begin{axis}[%
width=1.787in,
height=1.642in,
at={(1.089in,0.455in)},
scale only axis,
xmin=1,
xmax=10,
xlabel style={font=\color{white!15!black}},
xlabel={$|\mathcal{Z}_r|$},
ymin=0,
ymax=10,
ylabel style={font=\color{white!15!black}},
ylabel={Obj improvement (\%)},
axis background/.style={fill=white},
title style={font=\bfseries},
title={$\theta = 0$},
legend style={at={(0.97,0.03)}, anchor=south east, legend cell align=left, align=left, draw=white!15!black}
]
\addplot [color=black, line width=1.0pt]
  table[row sep=crcr]{%
1	0\\
2	6.15616517193306\\
3	7.25731084930356\\
4	8.13828863319622\\
5	8.79501535933495\\
6	9.15463417906602\\
7	9.43881409617062\\
8	9.5226563908794\\
9	9.62801921080834\\
10	9.90630851989406\\
};
\addlegendentry{B\&B}

\addplot [color=black, dashed, line width=1.0pt]
  table[row sep=crcr]{%
1	0\\
2	6.75064855604706\\
3	7.44107299190904\\
4	7.80660401459052\\
5	7.9291720154606\\
6	8.0586088425724\\
7	8.11094634517857\\
8	8.3491425509093\\
9	8.39651413193023\\
10	8.49918331060065\\
};
\addlegendentry{Ad hoc}

\end{axis}

\begin{axis}[%
width=1.787in,
height=1.642in,
at={(3.44in,0.455in)},
scale only axis,
xmin=1,
xmax=10,
xlabel style={font=\color{white!15!black}},
xlabel={$|\mathcal{Z}_r|$},
ymin=0,
ymax=10,
axis background/.style={fill=white},
title style={font=\bfseries},
title={$\theta = 0.5$}
]
\addplot [color=black, line width=1.0pt, forget plot]
  table[row sep=crcr]{%
1	0\\
2	6.15616517193306\\
3	7.25731084930356\\
4	8.13828863319622\\
5	8.79501535933495\\
6	9.15463417906602\\
7	9.43881409617062\\
8	9.5226563908794\\
9	9.62801921080834\\
10	9.90630851989406\\
};
\addplot [color=black, dashed, line width=1.0pt, forget plot]
  table[row sep=crcr]{%
1	0\\
2	5.08650820137832\\
3	5.43318602603986\\
4	5.89142408246632\\
5	6.06669942582505\\
6	6.34189934934264\\
7	6.40275371431634\\
8	6.61023716202623\\
9	6.65762708552831\\
10	6.91358996103604\\
};
\end{axis}

\begin{axis}[%
width=1.787in,
height=1.642in,
at={(5.792in,0.455in)},
scale only axis,
xmin=1,
xmax=10,
xlabel style={font=\color{white!15!black}},
xlabel={$|\mathcal{Z}_r|$},
ymin=0,
ymax=10,
axis background/.style={fill=white},
title style={font=\bfseries},
title={$\theta = 0.9$}
]
\addplot [color=black, line width=1.0pt, forget plot]
  table[row sep=crcr]{%
1	0\\
2	6.15616517193306\\
3	7.25731084930356\\
4	8.13828863319622\\
5	8.79501535933495\\
6	9.15463417906602\\
7	9.43881409617062\\
8	9.5226563908794\\
9	9.62801921080834\\
10	9.90630851989406\\
};
\addplot [color=black, dashed, line width=1.0pt, forget plot]
  table[row sep=crcr]{%
1	0\\
2	1.90630427263504\\
3	2.58439779869521\\
4	2.90338817606778\\
5	3.01914779109442\\
6	3.31452719453483\\
7	3.342356065033\\
8	3.63148227405021\\
9	3.71859262382208\\
10	3.82154931187745\\
};
\end{axis}
\end{tikzpicture}%
\captionsetup{width=14cm}
\caption{Objective function improvement (\%) of the B{\&}B and the problem-specific methods relative to the number of subsets over the splitting process (with the \textit{ex post} correction), for $N = 30$, $B = 3$.}
 \label{eq.plot.improvements}
\end{figure}
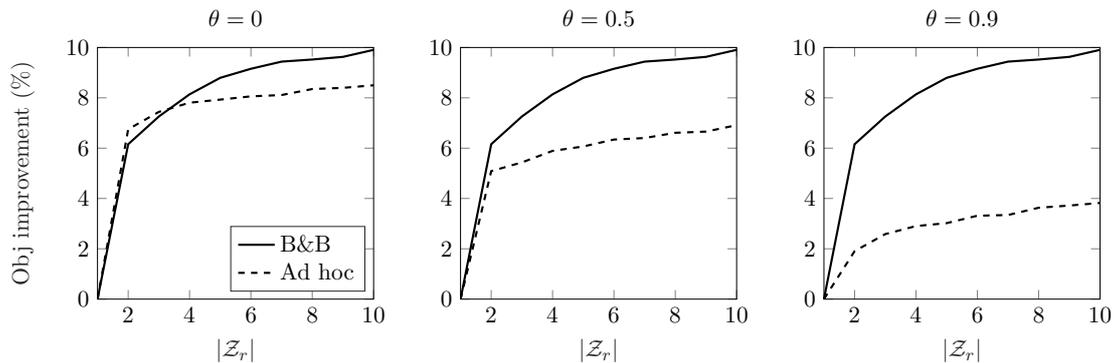

\begin{remark}
The running times of the problem-specific heuristic are lower than those of our B{\&}B scenario detection method. We do not report these running times since we have used a self-implemented B{\&}B framework to extract optimal dual vectors from nodes in the B{\&}B tree. To our knowledge, this is not possible using current commercial solvers.
\end{remark}

\section{Summary} \label{sec.Summary}
In this note, we have considered piecewise constant decision rules for mixed-integer adjustable robust optimization (RO) by adaptively partitioning the uncertainty set, as proposed by \cite{Postek2016} and \cite{Bertsimas2016}. In this approach, the uncertainty set is iteratively partitioned into smaller subsets in such a way that so-called critical scenarios are located in separate subsets. An open issue in this context has been how to detect these critical scenarios in problems involving integer decision variables. That is why we have provided a general-purpose critical scenario detection method for such problems that is based on the B{\&}B tree used to solve the corresponding static mixed-integer RO problem. In particular, the critical scenarios are directly derived from the optimal dual vectors in the nodes of the B{\&}B tree, at no extra computational cost.
Using numerical experiments on a route planning problem, we have shown that our general-purpose method outperforms the problem-specific heuristic method of \cite{Postek2016}.

\end{document}